\newcommand{\Ftwo}{\mathbb{F}_2}
\newcommand{\mchi}{\hat{\chi}}
\newcommand{\tw}{{\rm tw}}
\newcommand{\2}{\vspace{0.2cm}}
\newcommand{\comp}[1]{\overline{#1}}
\newtheorem{theorem}{Theorem}[section]
\newtheorem{proposition}[theorem]{Proposition}
\newtheorem{remark}[theorem]{Remark}
\newtheorem{corollary}[theorem]{Corollary}
\newtheorem{conjecture}[theorem]{Conjecture}
\newtheorem{claim}[theorem]{Claim}
\title{Large induced subgraphs with prescribed degree parity}
\author{Jiangdong Ai\thanks{School of Mathematical Sciences and LPMC, Nankai University. {\tt jd@nankai.edu.cn}.}
\hspace{2mm}
Qiwen Guo\thanks{Department of Computer Science, Royal Holloway University of London. {\tt gqwmath@163.com}.}
\hspace{2mm}
Gregory Gutin\thanks{Corresponding author. Department of Computer Science, Royal Holloway University of London,  and School of Mathematical Sciences and LPMC, Nankai University. {\tt g.gutin@rhul.ac.uk}.}
\hspace{2mm} Yiming Hao\thanks{School of Mathematical Sciences and LPMC, Nankai University. {\tt  
1120230031@mail.nankai.edu.cn}.}
\hspace{2mm} Anders Yeo\thanks{Department of Mathematics and Computer Science, University of Southern Denmark, and Department of Mathematics, University of Johannesburg.   {\tt andersyeo@gmail.com}.}
}
\begin{document}

\maketitle

\begin{abstract}
A long-standing conjecture of Caro (Discrete Math, 1994), confirmed by Ferber and Krivelevich (Adv\ Math, 2022), states that every $n$-vertex graph $G$ without isolated vertices contains an induced subgraph of order linear in $n$ in which every vertex has odd degree. We generalize this result to graphs $G$ whose vertices are labeled by $\ell: V(G)\to \{0,1\}$. We require, in an induced subgraph, all
$0$-labeled vertices to have even degree and all $1$-labeled vertices to have odd degree. Let $h_{\ell}(G)$ denote the maximum order of such a subgraph. 
Let $f_{oe}(G)=\min_{\ell} h_{\ell}(G)$ be the worst-labeling parameter. We establish a pointwise lower bound for $h_{\ell}(G)$ that immediately yields a linear lower bound in $|V(G)|$ for $f_{oe}(G)$, where $G$ has no isolated vertices. 
For an $n$-vertex connected graph, we obtain a sharp lower bound for $f_{oe}(G)$: 
$f_{oe}(G)\ge \lceil (n-1)/\mchi(G) \rceil ,$ where $\mchi(G)$ is the maximum chromatic number of a minor of $G.$ Using proved cases of Hadwiger's Conjecture, we show that for $t\in \{3,4,5,6\}$, if an $n$-vertex connected graph $G$ is $K_t$-minor-free, then $f_{oe}(G)\ge \lceil (n-1)/(t-1)\rceil$ and this bound is sharp for each $t\in \{3,4,5,6\}$. 
Finally, we conjecture that $f_{oe}(G)\ge f_o(G)/2$ for all graphs $G$ and confirm the conjecture for all trees and complete multipartite graphs.
\end{abstract}

\section{Introduction}

%For a graph $G$, an \emph{odd induced subgraph} is an induced subgraph in which every vertex has odd degree. 
A graph $H$ is {\em odd} if the degree of every vertex of $H$ is odd. A key result on odd spanning subgraphs was obtained by 
Scott \cite{Scott2001} who proved that every connected graph $G$ with even number of vertices has an odd spanning forest $F$ such that
each component of $F$ is an induced subgraph of $G$. %Clearly, such spanning forests generalize perfect matchings and
%thus Gutin \cite{Gutin2016} coined them {\em perfect forests}. 
Gutin \cite{Gutin2016} and  Caro, Lauri and Zarb \cite{CLZ} found short proofs of
Scott's theorem. Gutin and Yeo \cite{GutinYeo2022} generalized Scott's theorem to graphs whose vertices are labeled by parities such that the sum of the parities is even.
We call a labeling $\ell:\ V(G)\to \{0,1\}$ {\em even-sum} if $\sum_{v\in V(G)}\ell(v)$ is even.

\begin{theorem}\label{thm:GY}
If a graph $G$ is connected and a labeling $\ell:\ V(G)\to \{0,1\}$ is even-sum, 
then $G$ has a spanning forest $F$ such that every component of $F$ is an induced subgraph of $G$ and 
the degree $d_F(x)\equiv \ell(x) {\pmod 2}$ for every $x\in V(F).$ 
\end{theorem}

Henceforth, an induced subgraph $H$ of $G$ is called {\em $\ell$-admissible} if $d_H(x)\equiv \ell(x) {\pmod 2}$ for every $x\in V(H).$
We also call $V(H)$ an {\em $\ell$-admissible} set. 

For odd induced subgraphs, let $f_o(G)$ denote the maximum order of an odd induced subgraph of a graph $G$.
Let $G$ be a graph of order $n$ without isolated vertices. Resolving a problem of Noga Alon, Caro \cite{Caro1994} proved that $f_o(G)=\Omega(\sqrt{n})$ and conjectured the linear lower bound $f_o(G)=\Omega(n)$. Scott \cite{Scott1992} subsequently obtained $f_o(G)=\Omega(n/\log n)$, and the conjecture was finally settled by Ferber and Krivelevich.

\begin{theorem}[Ferber-Krivelevich \cite{FK}]\label{thm: os linear bound}
Every graph of order $n$ without isolated vertices contains an odd induced subgraph of order at least $cn$, where $c=10^{-4}$.
\end{theorem}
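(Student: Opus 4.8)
My plan is to work entirely over $\Ftwo$, where the parity conditions become linear. Let $A$ be the adjacency matrix of $G$ over $\Ftwo$ (zero diagonal), and for $S\subseteq V(G)$ let $\chi_S\in\Ftwo^{V(G)}$ be its indicator. For $v\in S$ the degree of $v$ in $G[S]$ reduces mod $2$ to $(A\chi_S)_v=\sum_{u\in S,\,u\sim v}1$, so $G[S]$ is an odd induced subgraph exactly when $(A\chi_S)_v=1$ for every $v\in S$. Observe that if one could solve $Ax=\mathbf 1$ outright, then every vertex of $\operatorname{supp}(x)$ would have an odd number of neighbours inside $\operatorname{supp}(x)$ and $S=\operatorname{supp}(x)$ would work; the order of $S$ is then the Hamming weight of $x$. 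Existence of \emph{some} odd induced subgraph is classical, so the whole content of the theorem is the linear \emph{size}. The first thing to flag is that the genuine condition is imposed only on the surviving set $S$, not on all of $V(G)$: requiring $(A\chi_S)_v=1$ for $v\in S$ is not a fixed linear system, since the set of constraints depends on the unknown support. (For instance $K_3$ has $\mathbf 1\notin\operatorname{Im}(A)$, yet any edge is an odd induced subgraph.) This self-reference is the heart of the difficulty.

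To manufacture a large candidate set I would first sample: keep each vertex independently with probability $1/2$ to get $T$ with $|T|\approx n/2$. For a vertex $v$ of positive degree, conditioned on $v\in T$ its degree in $G[T]$ is a sum of independent fair coins, hence odd with probability exactly $1/2$; thus in expectation about half of $T$ is already ``good'' (odd degree) while the ``bad'' set $B\subseteq T$ (even degree, including vertices left isolated) has size about $|T|/2$. The remaining task is to delete a set $D\subseteq T$ so that in $S=T\setminus D$ every surviving bad vertex has had its parity flipped while every surviving good vertex has not; flipping $v$ amounts to $|N(v)\cap D|$ being odd, so this is a parity system $\sum_{u\in N(v)}(\chi_D)_u=b_v$ over $\Ftwo$.

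The main obstacle is precisely that this correction system inherits the self-referential flaw: the constraints apply only to the survivors $T\setminus\operatorname{supp}(\chi_D)$, and deleting $D$ can itself turn good vertices bad or create new isolated vertices, so one cannot blindly solve a fixed linear system and still control the support size. My plan to break the circularity is to localise the corrections. I would find a matching — or, more flexibly, a system of distinct representatives — between the bad vertices and private ``correcting'' neighbours, so that flipping one bad vertex by deleting its representative does not interfere with the flips for other bad vertices. Since $G[T]$ has only a constant fraction of isolated vertices, a maximum matching in $G[T]$ should supply enough disjoint correcting structure to repair a positive fraction of $B$ in one round while sacrificing only a controlled (constant) fraction of the good vertices. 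Quantifying this collateral damage, and ensuring the corresponding submatrix of $A$ has full row rank so the parities can actually be set independently, is where I expect essentially all of the real work to lie.

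Finally I would close the argument by amortisation. Naively iterating the sample-and-correct step on the survivors retains only a $(1-o(1))$ fraction per round, which is exactly the loss that produces the $n/\log n$ bound of Scott; to reach a linear bound one must retain a \emph{fixed positive} fraction in each round and terminate after $O(1)$ rounds, or avoid iteration altogether. As a complementary one-shot device I would use the affine-subspace structure of the solution set: once a genuine $\Ftwo$ system $Ax=b$ is isolated, its solutions form a coset $x_0+\ker A$, and each coordinate that is ``free'' (nonzero on some kernel vector) equals $1$ on exactly half the solutions, so an averaging argument yields a solution of weight at least half the number of free coordinates. The explicit constant $c=10^{-4}$ would then emerge from optimising the sampling probability against the matching/rank bounds that govern how many bad vertices are fixed and how many good vertices are lost per round; I would expect the matching-and-rank step controlling collateral damage to dictate the constant and to carry the bulk of the proof.
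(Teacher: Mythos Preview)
The paper does not prove this theorem: Theorem~\ref{thm: os linear bound} is quoted from \cite{FK} and invoked as a black box inside the proof of Theorem~\ref{oddevenIS}. There is no proof in the paper to compare your proposal against.

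On your proposal itself: it is a plan, not a proof. You correctly set up the $\Ftwo$ framework, correctly identify the self-referential nature of the constraint $(A\chi_S)_v=1$ for $v\in S$, and describe the natural sample-then-repair strategy. But the step you yourself flag as the place ``where essentially all of the real work lies'' --- producing correcting neighbours so that deletions do not cascade, and guaranteeing that the relevant submatrix of $A$ has enough rank to set the parities independently --- is left entirely open. This is exactly the substance of the Ferber--Krivelevich argument, and nothing in your outline indicates how to carry it out: a matching or SDR does not by itself give the rank condition, and without that the repair step can create as many bad vertices as it fixes, which (as you note) is precisely what degrades the bound to Scott's $n/\log n$. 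The coset-averaging device you mention at the end only helps once a genuine fixed linear system on a linear-sized vertex set has been isolated --- again the missing step. Your sketch anticipates the shape of the argument but supplies no mechanism for its load-bearing part.
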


In Section~\ref{sec:FK} we generalize this theorem to graphs whose vertices are labeled by parities. 
Given a labeling $\ell:\ V(G)\to\{0,1\}$, write $V_i=\ell^{-1}(i)$ and define
$$
h_{\ell}(G):=\max\bigl\{\,|U|:\ U\subseteq V(G), U \mbox{ is $\ell$-admissible}\bigr\}.
$$
We also introduce the worst-labeling parameter
$$
f_{oe}(G):=\min_{\ell: V(G)\to\{0,1\}} h_{\ell}(G),
$$
which measures the largest $\ell$-admissible induced subgraph guaranteed against an adversarial labeling. 
%Thus an admissible induced subgraph $G[U]$ has even degree at every $0$-labeled vertex of $U$ and odd degree at every $1$-labeled vertex of $U$. 
Our main result, Theorem~\ref{oddevenIS}, is a point-wise lower bound on $h_{\ell}(G)$ expressed in terms of $|V_0|$, the number of isolated vertices of $G[V_1]$, and $f_o(G[V_1])$. Combined with Theorem~\ref{thm: os linear bound}, it yields the universal linear bound $f_{oe}(G)\ge |V(G)|/10004$ (Corollary~\ref{cor:universal}).

To prove Theorem~\ref{oddevenIS}, we will also use the following
classical theorem of Gallai (\cite{L1993}, Problem 5.17). 

\begin{theorem}[Gallai's Theorem]\label{thm: gallai thm}
For every graph $G$, the vertex set $V(G)$ admits
\begin{enumerate}
    \item a partition $V(G)=V_1\cup V_2$ such that all degrees in both $G[V_1]$ and $G[V_2]$ are even;
    \item a partition $V(G)=V_1\cup V_2$ such that all degrees in $G[V_1]$ are odd and all degrees in $G[V_2]$ are even.
\end{enumerate}
\end{theorem}

\medskip

Scott \cite{Scott1992} proved that $f_o(G)\ge |V(G)|/(2\chi(G))$ for every graph $G$ without isolated vertices. He posed the following:
\begin{conjecture}\label{conj:scott}
For every graph $G$ without isolated vertices, $f_o(G)\ge |V(G)|/\chi(G).$   
\end{conjecture}
Wang and Wu \cite{WW} showed that Conjecture \ref{conj:scott} fails for bipartite graphs, but holds for line graphs. It is still unknown whether the conjecture holds for 
non-bipartite graphs.
Very recently, Ning \cite{Ning} extended the positive result in \cite{WW} by confirming Conjecture \ref{conj:scott} for claw-free graphs without isolated vertices. 

In this paper, we will use the following parameter related to $\chi(G)$ and introduced by Halin \cite{Halin} who called it {\em modified chromatic number}.
For a graph $G$, let
$$
  \mchi(G)=\max\{\chi(H): H \text{ is a minor of } G\}
$$
Barioli et al. \cite{Barioli+} extended this definition to other graph parameters. 
%and called them {\em minor-monotone-ceiling parameters}. 
Hogben \cite{Hogben} is an informative survey on the topic.

%Unfortunately, $\mchi(G)$ is not bounded by any function of $\chi(G).$ For example, $\chi(K_{t,t})=2$ for every $t,$ but $\mchi(K_{t,t})\ge t+1$ as $K_{t,t}$ contains $K_{t+1}$ as a minor (contract $t-1$ disjoint edges).  
The main result of Section \ref{sec:trees} is Theorem \ref{thm:tw} whose proof uses Theorem \ref{thm:GY} thus providing a novel link between spanning and induced $\ell$-admissible subgraphs. The main part of Theorem \ref{thm:tw}
is the following sharp bound on $f_{oe}(G)$ for every connected graph $G$ of order $n$: $f_{oe}(G)\ge \lceil (n-1)/\mchi(G) \rceil$. 
This bound can be useful for classes $\cal C$ of connected graphs $G$, where $\mchi(G)$ is bounded by a constant. We show that $\mchi$ is bounded in $\cal C$ by a constant if and only if $\cal C$ excludes some fixed complete graph $K_t$ as a minor. 

Using proved cases of Hadwiger's Conjecture, we show that for $t\in \{3,4,5,6\}$, if an $n$-vertex connected graph $G$ is $K_t$-minor-free, then $f_{oe}(G)\ge \lceil (n-1)/(t-1)\rceil$ and this bound is sharp for every
$t\in \{3,4,5,6\}$. In particular, for connected $K_4$-minor-free graphs, we have  $f_{oe}(G)\ge \lceil (n-1)/3\rceil .$ It is different from the bound
$f_o(G)\ge 2n/5$ for $K_4$-minor-free graphs without isolated vertices proved by Hou et al. \cite{HYLL}.
Also, for a connected planar $n$-vertex graph $G$, we have $f_{oe}(G)\ge \lceil (n-1)/4 \rceil$ and this bound is sharp. Indeed, planar graphs are $K_5$-minor-free and the sharpness is due to $K_{2,2,2}$, see the case of $t=5$ in Theorem \ref{thm:Hadw}. As far as we know, there was no sharp lower bound for $f_o(G)$, where $G$ is a connected planar graph. The bound $f_o(G)\ge n/(2\chi(G))$ of \cite{Scott1992} implies only that $f_o(G)\ge n/8$. 
%However, if a planar graph $G$ without isolated vertices has girth $g(G)\ge 6$, then $f_o(G)\ge n/3$ \cite{RHZ}. Ning \cite{Ning} suspects that if $G$ is also bipartite, then $f_o(G)\ge n/2.$

The second part of Theorem \ref{thm:tw} is the claim that for every connected graph $G$ of order $n$ and an even-sum labeling $\ell$,
we have $h_{\ell}(G)\ge \lceil n/\mchi(G) \rceil$. This, in particular, allows us to show that every connected chordal graph with even number of vertices satisfies Conjecture \ref{conj:scott}.
It would be interesting to know whether every connected chordal graph with odd number of vertices satisfies Conjecture \ref{conj:scott}.

In Section \ref{sec:conj}, we conjecture that $f_{oe}(G)\ge f_o(G)/2$ for all graphs $G$ and prove the inequality for all trees and complete multipartite graphs.

%Gallai's proof in \cite{L1993} is purely combinatorial. The natural digraph analogue, where ``degree'' is replaced by ``out-degree'', does not always hold: call a partition $V=V_0\cup V_1$ of a digraph $D=(V,A)$ \emph{even-even} if every vertex of $D[V_0]$ and $D[V_1]$ has even out-degree, and \emph{even-odd} if every vertex of $D[V_0]$ has even out-degree while every vertex of $D[V_1]$ has odd out-degree. The directed $3$-cycle admits no even-even partition, and the directed $2$-vertex path admits no even-odd partition.

%In Section~\ref{sec:Gallai} we give a complete characterization of digraphs admitting such partitions via a linear algebraic approach over $\Ftwo$, in the spirit of \cite{Gutin2016}. We unify the even-even and even-odd cases by allowing each vertex to prescribe a possibly different parity for each side; the resulting Theorem~\ref{thm:general-digraph} reduces the existence question to the solvability of a single linear system over $\Ftwo$ and contains both classical cases as immediate corollaries. The same system also yields an exact count of all valid partitions (Corollary~\ref{cor:counting}), structural results for bipartite and symmetric digraphs (Proposition~\ref{prop:bipartite-symmetric}), and a complete combinatorial characterization for functional digraphs (Proposition~\ref{prop:functional}). All algorithms run in $O(|V|^3)$ arithmetic operations over $\Ftwo$ via Gaussian elimination.

\section{Generalizing Ferber--Krivelevich Theorem}\label{sec:FK}

We use the following theorem of Scott \cite{Scott1992}.

\begin{theorem}[Scott \cite{Scott1992}]\label{thm:Scott}
Let $G$ be a graph without isolated vertices and with a maximum independent set of size $p$. Then $G$ contains an odd induced subgraph of order at least $p/2$.
\end{theorem}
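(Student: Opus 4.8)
Throughout I work over $\mathbb{F}_2$, identifying a subset $U\subseteq V(G)$ with its indicator vector $x\in\mathbb{F}_2^{V}$. If $A$ is the adjacency matrix of $G$ over $\mathbb{F}_2$, then for every $v\in V$ the quantity $(Ax)_v$ is precisely $\deg_{G[U]}(v)\bmod 2$, so that $G[U]$ is an odd induced subgraph if and only if $(Ax)_v=1$ for all $v\in U$. I fix a maximum independent set $I=\{v_1,\dots,v_p\}$ and order the coordinates as $(I,R)$ with $R=V\setminus I$, which writes $A=\begin{pmatrix}0&B\\ B^{\mathsf T}&C\end{pmatrix}$, where $B$ records edges between $I$ and $R$ and $C$ records edges inside $R$. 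Since the paper's setting has no isolated vertices, every $v_i$ has a neighbour, which necessarily lies in $R$. The crucial gain from independence is that, writing $z\in\mathbb{F}_2^{R}$ for the $R$-part of $x$, the parity of $\deg_{G[U]}(v_i)$ equals $(Bz)_i$ and therefore depends \emph{linearly} on $z$ alone.

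The plan is to build the subgraph from the $R$-side. Given any $z$, I set $T=\operatorname{supp}(z)$, let $S=\operatorname{supp}(Bz)=\{v_i:(Bz)_i=1\}$, and take $U=S\cup T$. By construction every $v_i\in S$ has odd degree in $G[U]$ for free. Moreover the indicator of $S$ is exactly the vector $Bz$, so the remaining requirement, that each $t\in T$ also have odd degree, becomes $(B^{\mathsf T}Bz+Cz)_t=1$ for all $t\in\operatorname{supp}(z)$. Thus, setting $M:=B^{\mathsf T}B+C$ (a symmetric matrix over $\mathbb{F}_2$, now with a possibly nonzero diagonal), the whole problem reduces to the single self-contained task of finding $z\in\mathbb{F}_2^{R}$ that is \emph{$M$-odd}, meaning $(Mz)_t=1$ for every $t\in\operatorname{supp}(z)$, and that maximizes $|\operatorname{supp}(Bz)|$; any such $z$ yields a genuine odd induced subgraph of order at least $|\operatorname{supp}(Bz)|$.

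The constant $1/2$ now appears transparently: because each $v_i$ has a neighbour, every coordinate functional $z\mapsto(Bz)_i$ is a nonzero $\mathbb{F}_2$-linear form, hence takes the value $1$ on exactly half of all $z$, so for a uniformly random $z$ one has $\mathbb{E}\,|\operatorname{supp}(Bz)|=p/2$. The main obstacle, and where essentially all the difficulty lies, is that this averaging must be carried out not over all of $\mathbb{F}_2^{R}$ but over the set of $M$-odd vectors, since only those also make the $R$-side odd; one must show that restricting to this solution set still forces at least $p/2$ of the forms $(Bz)_i$ to equal $1$. This is precisely the point at which Scott's linear-algebraic argument (the one later modified in the proof of Theorem~\ref{oddevenIS}) is needed, and it is plausible to attack it by induction, since the reduced problem on $R$ has the same shape as the original but for the symmetric matrix $M$ in place of an adjacency matrix. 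The independence of $I$ is what keeps the $I$-side of the system strictly linear, and is what ultimately delivers the clean factor $1/2$.
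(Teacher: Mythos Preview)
The paper does not prove this theorem; it is quoted from Scott~\cite{Scott1992}, and only a \emph{modification} of Scott's idea reappears inside the proof of Theorem~\ref{oddevenIS}. So your proposal has to stand on its own, and it does not: you reduce to finding an $M$-odd vector $z\in\mathbb{F}_2^{R}$ with $|\operatorname{supp}(Bz)|\ge p/2$ and then explicitly leave that step open (``it is plausible to attack it by induction''). Worse, the residual claim is false in general. Take $G=K_{1,p}$ with $p$ even, the $p$ leaves forming the maximum independent set $I$ and $R=\{c\}$. Then $B$ is the all-ones $p\times 1$ column, $C=[0]$, and $M=B^{\mathsf T}B+C=[\,p\bmod 2\,]=[0]$, so the only $M$-odd vector is $z=0$ and your construction returns the empty subgraph; yet the theorem guarantees an odd induced subgraph of order $\ge p/2$, e.g.\ $\{c,v_1\}$. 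The flaw is structural: by forcing $S=\operatorname{supp}(Bz)$ you give up the freedom to choose \emph{which} $I$-vertices to include once $T$ is fixed, and that freedom is exactly what produces the bound.

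What actually works---and is visible, in modified form, in Case~III of the paper's proof of Theorem~\ref{oddevenIS}---is to take a \emph{minimal} $D\subseteq R$ dominating $I$, so that each $w\in D$ has a private neighbour $u_w\in I$ with $N(u_w)\cap D=\{w\}$; choose $D'\subseteq D$ uniformly at random; include each $v\in I\setminus\{u_w:w\in D\}$ having an odd number of neighbours in $D'$; and finally, for each $w\in D'$, toggle $u_w$ in or out to make the degree of $w$ odd. Privacy of the $u_w$'s makes these corrections non-interfering, every included vertex then has odd degree, and linearity of expectation gives $\mathbb{E}|U|\ge |D|/2+(p-|D|)/2=p/2$. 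Your linear-algebraic reduction discards precisely this toggling mechanism.
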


We extend the notation $f_o(H)$ to arbitrary graphs $H$ (possibly with isolated vertices) by defining it as the maximum order of an odd induced subgraph of $H$; isolated vertices are excluded since no odd induced subgraph can contain one.

The main result of this section is the following pointwise lower bound.

\begin{theorem}\label{oddevenIS}
Let $G=(V,E)$ be a graph without isolated vertices and let $\ell:\ V\to\{0,1\}$. Set $V_i=\ell^{-1}(i)$ for $i\in\{0,1\}$, and let $I$ be the set of isolated vertices of $G[V_1]$. Then
$$
h_{\ell}(G)\ge \max\Bigl\{\frac{|V_0|}{2},\ \frac{|I|}{2},\ f_o(G[V_1])\Bigr\}.
$$
\end{theorem}

\begin{proof}
We give three constructions.

\medskip
\noindent\textbf{Construction A: use the $0$-labeled vertices.}
Apply Theorem~\ref{thm: gallai thm}(1) to the induced subgraph $G[V_0]$. We obtain a partition $V_0=A\cup B$ such that both $G[A]$ and $G[B]$ have all degrees even. One part has size at least $|V_0|/2$; assume $|B|\ge|A|$. Then every vertex of $G[B]$ has even degree and every vertex of $B$ is labeled 0, so
$$
h_{\ell}(G)\ge|B|\ge\frac{|V_0|}{2}.
$$

\medskip
\noindent\textbf{Construction B: use the $1$-labeled vertices.}
Any odd induced subgraph of $G[V_1]$ is $\ell$-admissible in $G$ and so $h_{\ell}(G)\ge f_o(G[V_1])$.

\medskip
\noindent\textbf{Construction C: use the isolated vertices of $G[V_1]$.}
If $I=\varnothing$ the bound $|I|/2=0$ is trivial. So assume $I\neq\varnothing$. Consider $G'':=G[V_0\cup I]$. Every $u\in I$ has a neighbor in $V_0$ (since $G$ has no isolated vertices and vertices of $I$ have no neighbors in $V_1$); also $I$ is independent in $G''$.

Let $D\subseteq V_0$ be an inclusion-minimal subset that dominates $I$ in $G''$. We have the following: %By minimality, for each $w\in D$ there is a ``private'' neighbor $u_w\in I$ with $N(u_w)\cap D=\{w\}$.

\begin{claim}\label{clm:private}
For every $w\in D$ there exists $u_w\in I$ with $N_{G''}(u_w)\cap D=\{w\}$.
\end{claim}
\begin{proof}
If no such $u_w$ existed, every neighbor of $w$ in $I$ would have another neighbor in $D\setminus\{w\}$, so $D\setminus\{w\}$ would still dominate $I$, contradicting minimality.
\end{proof}

Let $I_D:=\{u_w:w\in D\}$. Choose $D'\subseteq D$ uniformly at random. Define
$$
I_0:=\{u\in I\setminus I_D:\ |N(u)\cap D'|\text{ is odd}\}.
$$
For each $u\in I\setminus I_D$, fix $w_u\in N(u)\cap D$; the involution $S\mapsto S\triangle\{w_u\}$ shows that exactly half of all subsets of $D$ give odd intersection with $N(u)$, so
$$
\mathbb{P}\bigl(|N(u)\cap D'|\text{ is odd}\bigr)=\tfrac{1}{2}.
$$
Define
$$
I_1:=\{u_w\in I_D:\ w\in D'\text{ and }\deg_{G[D'\cup I_0]}(w)\text{ is odd}\},
$$
and set $U:=D'\cup I_0\cup I_1$.

\emph{Parity verification.} For each $u\in I_0$, all neighbors of $u$ that lie in $U$ belong to $D'$ (since $I$ is independent), 
so $\deg_{G[U]}(u)=|N(u)\cap D'|$, which is odd by definition. Each $u_w\in I_1$ has $N(u_w)\cap D\cap U=\{w\}$ and 
no neighbors in $I$, so $\deg_{G[U]}(u_w)=1$, which is odd. For $w\in D'\subseteq V_0$, if $u_x\in I_1$ 
and $x\neq w$ then $w\notin N(u_x)\cap D$ by Claim~\ref{clm:private}, so
$$
\deg_{G[U]}(w)\equiv \deg_{G[D'\cup I_0]}(w)+\mathbf{1}_{\{u_w\in I_1\}}\pmod{2},
$$
and the definition of $I_1$ ensures that this is even. Thus, $\deg_{G[U]}(v)\equiv \ell(v)\pmod{2}$ for all $v\in U$.

\emph{Size bound.} By linearity of expectation,
$$
\mathbb{E}|U|\ge\mathbb{E}|D'|+\mathbb{E}|I_0|=\frac{|D|}{2}+\frac{|I\setminus I_D|}{2}\ge
\frac{|D|}{2}+\frac{|I|-|D|}{2}=\frac{|I|}{2}.
$$
Hence, there exists a choice of $U$ with $|U|\ge |I|/2$, giving $h_{\ell}(G)\ge|I|/2$.

Combining Constructions A, B, and C completes the proof.
\end{proof}

The point-wise theorem yields the following corollaries.

\begin{corollary}\label{cor:Scott-bound}
Under the assumptions of Theorem~\ref{oddevenIS},
$$
h_{\ell}(G)\ge \frac{1}{2}\max\Bigl\{|V_0|,\ |I|,\ \alpha(G[V_1]-I)\Bigr\},
$$
where $\alpha(\cdot)$ denotes the independence number.
\end{corollary}
\begin{proof}
By Theorem~\ref{thm:Scott}, $G[V_1]-I$ has an odd induced subgraph of order at least $\alpha(G[V_1]-I)/2$. Since $f_o(G[V_1])=f_o(G[V_1]-I)$, the result follows from Theorem~\ref{oddevenIS}.
\end{proof}

\begin{corollary}\label{cor:universal}
Suppose every graph $H$ without isolated vertices satisfies $f_o(H)\ge c|V(H)|$ for some $c>0$. Then every graph $G$ without isolated vertices and every labeling $\ell:\ V(G)\to\{0,1\}$ satisfy
$$
h_{\ell}(G)\ge \frac{c}{1+4c}\,|V(G)|.
$$
In particular, Theorem~\ref{thm: os linear bound} implies $f_{oe}(G)\ge \frac{1}{10004}\,|V(G)|$.
\end{corollary}
\begin{proof}
Let $n=|V(G)|$, $x=|V_0|/n$, $y=|I|/n$, and $\kappa=c/(1+4c)$. Since $G[V_1]-I$ has no isolated vertices, $f_o(G[V_1])\ge c(1-x-y)n$. Suppose for contradiction that all three quantities in Theorem~\ref{oddevenIS} are less than $\kappa n$. Then $x<2\kappa$, $y<2\kappa$, and $1-x-y<\kappa/c$. Adding the three inequalities gives us $1<4\kappa+\kappa/c=(4c+1)/(1+4c)=1$, a contradiction.
\end{proof}

\section{Bounding $h_{\ell}(G)$ for connected graphs using modified chromatic number}\label{sec:trees}

Recall that $\mchi(G)=\max\{\chi(H): H \text{ is a minor of } G\}.$

%Let $\tw(G)$ denote the tree-width of a graph $G.$ The main result of this section is follows.

\begin{theorem}\label{thm:tw}
Let $G$ be a connected graph of order $n$.
%Then $V(G)$ can be partitioned into at most $\tw(G)+1$ sets such that each subgraph of $G$ induced by such a set is  $\ell$-admissible. 
Then \begin{equation}\label{ineq:1} f_{oe}(G)\ge  \lceil (n-1)/\mchi(G) \rceil\end{equation}  
If a labeling $\ell$ of $V(G)$ is even-sum, then \begin{equation}\label{ineq:2}h_{\ell}(G)\ge  \lceil n/\mchi(G) \rceil\end{equation}
Both bounds are sharp.
\end{theorem} 
\begin{proof}
Let $\ell:\ V(G)\to\{0,1\}$ be an arbitrary labeling.  Consider two cases.

\2

\noindent {\bf Case 1: $\ell$ is even-sum.} By Theorem \ref{thm:GY}, $V(G)$ can be partitioned into sets $X_1,\dots ,X_p$ such that each $G[X_i]$ is connected and $\ell$-admissible.  We will call the subgraphs $G[X_i]$ {\em $\ell$-subgraphs}. Let $G^*$ be obtained from $G$ by contracting $X_1,\dots ,X_p$ and let $x_1,\dots ,x_p$ be the corresponding vertices of $G^*.$ Let $c$ be a proper coloring of $G^*$ into $\chi(G^*)$ colors and 
let the color of $\ell$-subgraph $G[X_i]$ be the color of $x_i$ in $c$ for each $i\in [p].$ Observe that there is no edge between any $\ell$-subgraphs of the same color. 
Thus, the union of all $\ell$-subgraphs of the same color is $\ell$-admissible. 

Hence, $V(G)$ can be partitioned into $\chi(G^*)$ sets each of which induces an $\ell$-admissible subgraph. Let $s$ be the maximum size of such a set. Clearly, $s\ge \lceil n/\chi(G^*) \rceil\ge \lceil n/\mchi(G)\rceil.$ 
%Since tree-width is minor-monotone \cite{Dies}, we have $\tw(G^*)\le \tw(G).$  Since for every graph $H$,  $\chi(H)\le \tw(H)+1$ \cite{Dies}, we have $\chi(G^*)\le \tw(G^*)+1\le \tw(G)+1.$ 
Thus, (\ref{ineq:2}) holds.
%\begin{equation}\label{ineq1} h_{\ell}(G)\ge \lceil n/\mchi(G)\rceil\end{equation}

\2

\noindent {\bf Case 2: $\ell$ is not even-sum.} Let $U:=\ell^{-1}(1)=\{v\in V(G): \ell(v)=1\}$ and let $T$ be a spanning tree of $G$. We will show that there is a vertex $u\in U$ such that every connected component of $T-u$ contains an even number of vertices from $U$.
If $|U|=1$, take $u$ to be the unique vertex in $U$. Then every connected component of $T-u$ contains no vertex from $U$. Now suppose that $|U|\ge 3$. Let $T_U$ be the smallest connected subtree of $T$ containing $U$. Choose any leaf $u$ of $T_U.$
By minimality of $T_U$, every leaf of $T_U$ must belong to $U$. Hence $u\in U$.
Since $u$ is a leaf of $T_U$, among the connected components of $T-u$, at most one component contains vertices from $U\setminus\{u\}$. In fact, that component contains all of $U\setminus\{u\}$, while the other components contain no vertices from $U$. Thus, the numbers of vertices from $U$ in the connected components of $T-u$ are
\(
|U|-1,\ 0,\ 0,\ldots,0.
\)
Therefore, every connected component $C$ of $T-u$ satisfies
\(
\sum_{v\in V(C)} \ell(v)\equiv 0\pmod 2.
\)

Let $C'_1,\dots ,C'_q$ be connected components of $G-u$. 
Observe that for each $i\in [q]$, $V(C'_i)$ is the union of the vertex sets of some connected components of $T-u$.
Thus, we have \(
\sum_{v\in V(C_i')} \ell(v)\equiv 0\pmod 2
\) for each $i\in [q]$. 
We can apply Theorem \ref{thm:GY} to each connected component of $G-u$ and conclude that for each $i\in [q]$, as in Case 1,
there is a set $U_i\subseteq V(C'_i)$ such that $G[U_i]$ is $\ell$-admissible in $C'_i$ and $|U_i|\ge \lceil |V(C'_i)|/\mchi(C'_i) \rceil.$
Observe that $G[U_1\cup \dots \cup U_q]$ is $\ell$-admissible in $G$.
Since $\mchi(C'_i)\le \mchi(G)$, we have $$|U_1\cup \dots \cup U_q|\ge \sum_{i=1}^q \lceil |V(C'_i)|/\mchi(G) \rceil\ge \lceil (n-1)/\mchi(G)\rceil.$$ Thus, $h_{\ell}(G)\ge \lceil (n-1)/\mchi(G)\rceil .$

Since $\ell$ is arbitrary, (\ref{ineq:1}) follows from the above bound and (\ref{ineq:2}). 

To see that both bounds are sharp consider $K_2$ with both vertices labeled by 0. 
\end{proof}

For each of the two bounds of Theorem \ref{thm:tw} there is a number of interesting consequences. We will start with bound (\ref{ineq:1}) and show how to use 
it to obtain a bound of the form $f_{oe}(G)\ge (n-1)/c,$ where $c$ is a constant. We can use the following:

\begin{proposition}\label{prop:finite}
Let $\cal C$ be a class of graphs. Then $\mchi$ is bounded in $\cal C$ by a constant if and only if $\cal C$ excludes some fixed complete graph $K_t$ as a minor.    
\end{proposition}
\begin{proof}
If $\mchi(G)\le k$ then $G$ cannot have a $K_{k+1}$ minor as $\chi(K_{k+1})=k+1.$ Conversely, if every $G\in \cal C$ is $K_t$-minor-free, 
then every minor of every $G\in \cal C$ is also $K_t$-minor-free. Delcourt and Postle \cite{DP} proved that every $K_t$-minor-free graph has chromatic number bounded by 
$O(t\log\log t).$ Hence, $\mchi(G)=O(t\log\log t).$
\end{proof}

Unfortunately, Proposition \ref{prop:finite} does not give us the optimal value of $c$ in $f_{oe}(G)\ge (n-1)/c.$ To get it, we can use
Hadwiger's Conjecture: $\chi(G)\le \eta(G)$ for each graph $G$, where $\eta(G)$ is the maximum $t$ such that $K_t$ is a minor of $G$. Observe that Hadwiger's Conjecture is equivalent to $\mchi(G)=\eta(G)$ for every $G$. Indeed, $K_{\eta(G)}$ is a minor of $G$ so $\mchi(G)\ge \eta(G).$ Conversely, if Hadwiger's Conjecture holds, then every minor $H$ of $G$ satisfies $\chi(H)\le \eta(H)\le \eta(G).$ Thus, $\mchi(G)\le \eta(G).$ Hadwiger's Conjecture was proved for $\eta(G)\le 5$ \cite{Dies}. Thus, we have the following:

\begin{theorem}\label{thm:Hadw}
For $t\in \{3,4,5,6\}$, if an $n$-vertex connected graph $G$ is $K_t$-minor-free, then $f_{oe}(G)\ge \lceil (n-1)/(t-1)\rceil$ and this bound is sharp. 
\end{theorem}
\begin{proof}
Since the bound itself was proved before this theorem, it is enough to prove sharpness.

\2

{\bf $t=3$ (trees):} Observe that the bound becomes an equality already for $K_2$; label both vertices by 0. 

\2 

{\bf $t=4$ (partial 2-trees):} Observe that the bound becomes an equality already for $K_3$ with labels 0, 0, 1. 

\2 

{\bf $t=5$ ($K_5$-minor-free graphs):} The bound becomes an equality for $K_{2,2,2}$. Indeed, $f_{oe}(K_{2,2,2})\ge 2$ and $f_o(K_{2,2,2})=2.$ To show that $f_o(K_{2,2,2})=2$, consider a maximum-order induced odd subgraph $H$ of $K_{2,2,2}$ and let $x_1,x_2,x_3$ be the number of vertices of $H$ in each partite set of $K_{2,2,2}$. If $\min\{x_1,x_2,x_3\}=0$ then 
we observe that $f_o(K_{2,2})=2.$ Otherwise, for each $i\in [3]$, we have $x_i>0$ and $|V(H)|-x_i$ is odd. Now, if $|V(H)|$ is odd,
then each $x_i=2$, a contradiction with $|V(H)|$ being odd. If $|V(H)|$ is even then each $x_i=1$, a contradiction with $|V(H)|$ being even.
Note that $K_{2,2,2}$ is $K_5$-minor-free as it is planar. 

\2

{\bf $t=6$ ($K_6$-minor-free graphs):} Let $C_7=v_0v_1v_2\dots v_6v_0$ and consider $G=\comp{C_7}.$ We will prove that $G$ is $K_6$-minor-free and $f_o(G)=2=\lceil 6/5\rceil.$
Suppose that $G$ has a $K_6$ minor. Since $G$ has only
seven vertices, a $K_6$-minor model either has six singleton branch sets,
which would give a proper $K_6$ subgraph, or has one branch set of size $2$ and five
singleton branch sets.  In the latter case, the five singleton branch sets
would have to be pairwise adjacent, giving a $K_5$ subgraph.  This is
impossible because the maximum clique $\omega(G)=3$.  Therefore $G$ is $K_6$-minor-free.

Now we prove that $f_o(G)=2$. Let $G[S]$ be an odd subgraph. Clearly, $f_o(G)\ge 2$ and 
since $|S|$ is even, to complete the proof we should exclude the cases $|S|=4$ and $|S|=6$. 
For each $v\in S$,
\[
        d_{G[S]}(v)=|S|-1-d_{C_7[S]}(v).
\]
If $|S|=4$ or $|S|=6$, then $|S|-1$ is odd.  Since $d_{G[S]}(v)$ must
be odd, it follows that $d_{C_7[S]}(v)$ is even for every $v\in S$.
But $C_7[S]$ is a proper induced subgraph of a cycle, hence is a forest.  A
forest in which every vertex has even degree has no edges.  Consequently $S$
would have to be an independent set in $C_7$.  This is impossible for
$|S|=4$ or $|S|=6$, because the largest independent sets of $C_7$ have only 3 vertices each.
\end{proof}

Bound (\ref{ineq:2}) implies $f_o(G)\ge  \lceil n/\mchi(G) \rceil$ for a connected graph $G$ with even number of vertices. We will show that this bound can be used for constructing graph classes satisfying Conjecture \ref{conj:scott}. Recall that Conjecture \ref{conj:scott} claims that $f_o(G)\ge n/\chi(G)$ for every graph $G$ without isolated vertices and it was proved for claw-free graphs without isolated vertices \cite{Ning}. Hence, we are interested in graph classes that include some non-claw-free graphs. %The main idea is to consider graph classes for which $\mchi=\chi$.

\begin{corollary}\label{cor:Scott}
Let $\cal C$ be a class of connected graphs with even number of vertices
such that for every graph $G$ in $\cal C$ we have $\mchi(G)=\chi(G)$. 
Then all graphs in $\cal C$ satisfy Conjecture \ref{conj:scott}.    
\end{corollary}
%\begin{proof}
%Let an $n$-vertex graph $G\in \cal C$ and let $N$ be a component of $G$. Then  $f_o(N)\ge n_N/\chi(N)$, where $n_N$ is the order of $N.$ Let ${\cal P}$ be the set of components of $G$. 
%We have
%$$f_o(G)=\sum_{N\in {\cal P}}f_o(N)\ge \sum_{N\in {\cal P}} n_N/\chi(N)\ge n/\chi(G).$$
%\end{proof}

Now we will show that every connected chordal graph $G$ with even number of vertices satisfies Corollary \ref{cor:Scott}. As chordal graphs are perfect and their tree-width equals the order of maximum clique minus 1, we have $\chi(G)=\omega(G)=\tw(G)+1.$
For every graph $L$, $\chi(L)\le \tw(L)+1$ \cite{Dies}. Hence, for a minor $H$ of $G$, we have $\chi(H)\le \tw(H)+1\le \tw(G)+1=\chi(G)$. Hence $\mchi(G)=\chi(G)$ and so $G$ satisfies Conjecture \ref{conj:scott}. To construct a non-claw-free chordal graph
start from a complete graph $K_p$, choose $x\in V(K_p)$ and add four new vertices adjacent to $x$.
%Note that the main ingredient of the above proof is $\chi(G)=\tw(G)+1.$ Thus, together with chordal graphs without isolated vertices, odd cycles and many other graph classes whose graphs are obtained by clique-sum constructions, satisfy Conjecture \ref{conj:scott}. 

%\begin{lemma}\label{lem:RB}
%Let $F$ be a forest, and let $\ell:\ V(F)\to\{0,1\}$. Suppose that  $\sum_{v\in V(C)} \ell(v)$ is even for every connected component $C$ of $F$.
%Then there exists a bipartition
%\( V(F)=R\cup B\)
%such that both $F[R]$ and $F[B]$ are $\ell$-admissible in $F$.
%\end{lemma}
%\begin{proof}
%Without loss of generality, we may assume that $F$ is a tree. By Theorem \ref{thm:GY}, $V(F)$ can be partitioned into sets $V_1,\dots ,V_p$ such that each $F[V_i]$ is $\ell$-admissible. If we contract each $V_i$ into a vertex, we obtain a tree whose vertices can be properly colored into two colors. Thus, in $F$ we can color all vertices in each $V_i$ into one of two colors so that if the colors in $V_i$ and $V_j$, $1\le i\ne j\le p$, coincide then there is no edge between $V_i$ and $V_j$ in $F$. This implies 
%that if $R$ and $B$ are both the unions of vertices of $F$ of the same color, then they satisfy the conditions of the lemma. 
%\end{proof}
%\begin{theorem}
%Let $T$ be a tree of order $n$. Then
%\(
%f_{oe}(T)\ge \left\lfloor \frac n2\right\rfloor . 
%\)
%This bound is tight.
%Equivalently, for every labeling $\ell:V(T)\to\{0,1\}$, we have
%\(
%h_\ell(T)\ge \left\lfloor \frac n2\right\rfloor .
%\)
%\end{theorem}

\section{Conjecture on lower bound for $f_{oe}(G)/f_o(G)$}\label{sec:conj}

We state the following conjecture and provide two results supporting it. 

\begin{conjecture}\label{conj:half}
For every graph $G$, we have $f_{oe}(G)\ge f_o(G)/2$.
\end{conjecture}

%Note that the all-odd labeling $\ell\equiv 1$ is a special case, so the optimal universal constant in the labeled problem cannot exceed that in the all-odd problem. The following proposition makes this precise.

%\begin{proposition}\label{prop:constants-order}
%Let $c^*:=\sup\{c: f_o(H)\ge c|V(H)|\text{ for every graph }H\text{ without isolated vertices}\}$ and $\alpha^*:=\sup\{\alpha: h_{\ell}(G)\ge\alpha|V(G)|\text{ for all }G \text{      without isolated vertices},\text{ and all }\ell\}$. Then $\alpha^*\le c^*$.
%\end{proposition}
%\begin{proof}
%The labeling $\ell\equiv 1$ reduces the labeled problem to the all-odd problem.
%\end{proof}

\begin{proposition}\label{prop:tree}
For every tree $T$, we have $f_{oe}(T)\ge f_o(T)/2.$
\end{proposition}
\begin{proof}
Let $T$ have $n$ vertices and $\ell$ be an arbitrary labeling of $V(T)$.
By (\ref{ineq:1}),
$h_{\ell}(T) \ge \lceil{\frac{n-1}{2}}\rceil = \lfloor{\frac{n}{2}}\rfloor$.  If $T$ itself is odd, then $n$ is even and
$f_o(T)=n$, so
$h_{\ell}(T) \ge \frac{n}{2} = \frac{f_o(T)}{2}.$
If $T$ is not odd, then $f_o(T)\le n-1$. Note that $h_{\ell}(T) \ge \lfloor{\frac{n}{2}}\rfloor\ge \frac{n-1}{2}.$ 
Hence, $h_{\ell}(T) \ge \frac{f_o(T)}{2}.$
Since $\ell$ is arbitrary, we conclude that $f_{oe}(T)\ge f_o(T)/2$.
\end{proof}

%\begin{proposition}\label{prop:clique}
%For the complete graph $K_n$ with $n\ge 2$,$
%$$f_{oe}(K_n)=\Bigl\lfloor\frac{n}{2}\Bigr\rfloor.$$
%Moreover, $f_o(K_n)=n$ if $n$ is even and $f_o(K_n)=n-1$ if $n$ is odd, so in particular $f_{oe}(K_{2m})=\frac{1}{2}f_o(K_{2m})$.
%\end{proposition}
%\begin{proof} Fix a labeling $\ell$, and let $a=|L\cap \ell^{-1}(0)|$, $b=|L\cap \ell^{-1}(1)|$, so $a+b=n$.
%Every induced subgraph of $K_n$ on $t$ vertices is a clique $K_t$ in which every vertex has degree $t-1$. For an $\ell$-admissible $K_t$, either $t$ is odd (so $t-1$ is even) and all $t$ chosen vertices are labeled 0, or $t$ is even (so $t-1$ is odd) and all $t$ chosen vertices are labeled 1. Hence $h_{\ell}(K_n)=\max\{o(a),e(b)\}$, where $o(a)$ is the largest odd integer $\le a$ and $e(b)$ the largest even integer $\le b$.

%\emph{Lower bound.} Write $n=2m$ or $n=2m+1$. In both cases, it can be checked that for any split $a+b=n$, at least one of $o(a),e(b)$ is at least $m=\lfloor n/2\rfloor$.

%\emph{Upper bound.} For $n=2m$: take $a=b=m$; then $\max\{o(m),e(m)\}=m$. For $n=2m+1$: choose $a,b$ of appropriate parities to achieve $\max\{o(a),e(b)\}=m$.

%Hence $f_{oe}(K_n)=\lfloor n/2\rfloor$.
%\end{proof}

\begin{theorem}
 If $G$ is a complete multipartite graph, then $f_{oe}(G) \geq f_o(G)/2$.
\end{theorem}
\begin{proof}
%Let $n$ denote the order of $G$. 
Let $U \subseteq V(G)$ such that all degrees in $G[U]$ are odd and $|U|=f_o(G)$.  %Let $G_U = G[U]$ and 
Let $V_1,V_2,\ldots,V_p$ be the partite sets of $G[U]$. 
%and let $n_U=|U|$.
Note that $|U|$ is even as every vertex degree in $G[U]$ is odd. Since the degree in $G[U]$ of every vertex in $V_i$ is $|U|-|V_i|,$ which is odd, $|V_i|$ is odd for every $i \in [p]$. 
Since $|U|$ is even and each $|V_i|$ is odd, $p$ is even. 

Let $\ell : V(G) \rightarrow \{0,1\}$ be a labeling of $V(G)$ and let $V_i^j=\ell^{-1}(j)\cap V_i,$ where $j\in \{0,1\}.$
%\{ w \; | \; w \in V_i \; \& \; \phi(w)=j\}$. 
Define $r_1,r_2,\ldots,r_p$ such that $|V_i^{r_i}| > |V_i^{1-r_i}|$ for every $i \in [p]$, which is possible as $|V_i|$ is odd.
Let $W = \cup_{i=1}^p V_i^{r_i}$ and let $V_i^W = V_i \cap W$ for all $i \in [p]$.
As $|V_i^{r_i}| \geq |V_i^{1-r_i}|+1$, we have $|W| \geq (|U| + p)/2$.

Note that all vertices in $V_i^W$ have the same degree and also the same $\ell$-value. We call $V_i^W$ {\em good} if the degree and $\ell$-value are of the same parity for every vertex in $V_i^W$. Otherwise, we call $V_i^W$ {\em bad}.
Without loss of generality, assume that all $V_1^W,V_2^W,\ldots , V_s^W$ are bad and all $V_{s+1}^W,V_{s+2}^W,\ldots , V_p^W$ are good. 
We consider the following cases, where in Cases 1-3, $s$ is even and in Cases 4-6, $s$ is odd. 

\2

{\bf Case 1:} $s$ is even and $s \leq p/2$. We remove one vertex from each set $V_1^W,V_2^W,\ldots , V_s^W$ in $W$ and denote the resulting set $W^*$. Observe that $|W^*| \geq |W|- p/2 \geq |U| / 2$. Also, $W^*$ is $\ell$-admissible. Indeed, if $x\in W^*$ is in a good set, $d_{G[W^*]}(x)$ and $d_W(x)$ are of the same parity  since $s$ is even, implying that $d_{G[W^*]}(x)$ and $\ell(x)$ are of the same parity, and if $x\in W^*$ is in a bad set, $d_{G[W^*]}(x)$ and $d_{G[W]}(x)$ are of different parity since $s-1$ is odd, which means that $d_{W^*}(x)$ and $\ell(x)$ are of the same parity. 

\2

{\bf Case 2:} $s$ is even, $s > p/2$ and  $|V_i^{r_i}| \geq |V_i^{1-r_i}|+2$ for all $i \in [s]$. 
We remove one vertex from each set $V_1^W,V_2^W,\ldots , V_s^W$ in $W$ and denote the resulting set $W^*$.
We then have $|W^*| \geq |U| / 2$, as $W^*$ contains at least half of the vertices in $V_i$ for all $i \in [p]$. 
As in Case 1, $W^*$ is $\ell$-admissible.

\2

{\bf Case 3:} $s$ is even and $s > p/2$ and $|V_{i_0}^{r_{i_0}}| = |V_{i_0}^{1-r_{i_0}}|+1$ for some $i_0 \in [s]$. Since $p$ is even, $s \ge p/2+1$ and $p-s \le p/2-1$.
In $W$, we remove one vertex from each set $V_{s+1}^W,V_{s+2}^W,\ldots , V_p^W$ and replace  $V_{i_0}^{r_{i_0}}$ with $V_{i_0}^{1-r_{i_0}}$. We denote the resulting set $W^*$.
We have $|W^*| \geq |W| - (p/2-1)-1 \geq |U| / 2$. Note that for a vertex $x$ in $G[U]$ removing one vertex from another partite set flips parity the degree of $x$; 
removing one from the same partite set does not; replacing $V_{i_0}^{r_{i_0}}$ with $V_{i_0}^{1-r_{i_0}}$  changes the label and changes selected size by one in the margin-one case.
Hence, $W^*$ is $\ell$-admissible.

\2

{\bf Case 4:} $s$ is odd and $s \geq p/2$. We remove one vertex from each set $V_{s+1}^W,V_{s+2}^W,\ldots , V_p^W$ in $W$ and denote the resulting set $W^*$. We note that again $W^*$ is $\ell$-admissible and $|W^*| \geq |W| - p/2 \geq |U|/ 2$.

\2

{\bf Case 5:} $s$ is odd and $s < p/2$ and  $|V_i^{r_i}| \geq |V_i^{1-r_i}|+2$ for all $i \in \{s+1,s+2,\ldots,p\}$. 
We remove one vertex from each set $V_{s+1}^W,V_{s+2}^W,\ldots , V_p^W$ in $W$ and denote the resulting set $W^*$.
We then have $|W^*| \geq |U| / 2$, as $W^*$ contains at least half of the vertices in $V_i$ for all $i \in [p]$. 
As before, $W^*$ is $\ell$-admissible.

\2

{\bf Case 6:} $s$ is odd, $s < p/2$ and $|V_{i_0}^{r_{i_0}}| = |V_{i_0}^{1-r_{i_0}}|+1$ for some ${i_0} \in \{s+1,s+2,\ldots,p\}$. Since $p$ is even, $s \le p/2-1$.
Then remove one vertex from each set $V_{1}^W,V_{2}^W,\ldots , V_s^W$ and replace $V_{i_0}^{r_{i_0}}$ with $V_{i_0}^{1-r_{i_0}}$. We denote the resulting set $W^*$.
Then $|W^*| \geq |W| - (p/2-1)-1 \geq |U| / 2$ and, similar to Case 3, $W^*$ is $\ell$-admissible. 

Since $\ell$ is arbitrary, we conclude that $f_{oe}(G) \geq f_o(G)/2$.
\end{proof}

%\paragraph{Acknowledgement} The authors used ChatGPT Pro during an early stage of exploring how to directly prove Corollary \ref{cor:RB}. 

\end{document}